\title {The equality case in Wu--Yau inequality}
\author{Stefano Trapani}
\address{Stefano Trapani \\ Università\`a di Roma \lq\lq Tor Vergata\rq\rq{} \\ Via della Ricerca Scientifica 1 \\ I-00179 Roma.}
\email{trapani at mat.uniroma2.it}
\date  {\today} 
\newtheorem{theorem}             {Theorem}    [section]
\newtheorem{remark}     [theorem]{Remark}      %
\newenvironment{proof}{\begin{trivlist}\item[]{\em Proof.}}%
                      {\rule{.1in}{.1in}\end{trivlist}}
\newcounter    {assertcount}
\renewcommand{\epsilon}{\varepsilon}
\renewcommand{\phi}    {\varphi}
\begin{document}

\maketitle

\begin{abstract}
In recent papers Wu-Yau, Tosatti-Yang and Diverio-Trapani, used some natural differential inequalities for compact K\"ahler manifolds with quasi negative holomorphic sectional curvature to derive positivity of the canonical bundle. In this note we study the equality case of these inequalities.

\end{abstract}

\section{Introduction}
Let $(X, \omega) $ be a compact connected K\"ahler manifold of complex dimension $n.$ Let  ${\bf k}_x(v) $ be the holomorphic sectional curvature of $\omega$ in  $x$ and in the direction of $ v \in T_x(X)$ with $||v||_{\omega} = 1.$  That is the sectional curvature of $\omega$ on the real two dimensional subspace of $T_x(X)$ spanned by $v,Jv$ where $J$ is the complex structure of $X.$   
Assume that such holomorphic sectional curvature is "quasi negative" i.e. that ${\bf k}_x(v) \leq 0 $ for all $x \in X$ and $v$ unit vector in $T_x(X),$ and that there exists a point $x_0 \in X$ such that 
${ \bf k}_{x_0}(v) < 0$ for every  unit vector $v \in T_{x_0}(X).$ 
It was conjectured by S. T. Yau that such manifolds have ample canonical bundle. This  conjecture has been proved by Yau himself together with D. Wu in \cite{W-Y} when the manifold is projective and the curvature is strictly negative, shortly after  this result  was extended by Tosatti and Yang  in \cite{T-Y} to the case where the curvature is strictly negative and the manifold is K\"ahler. Finally the general conjecture was proved in \cite{D-T},  later all these results were proved in a unified way in   \cite{W-Y-2}. The method of proof in all these papers is similar and it stems from the original \cite{W-Y} where the main point is the proof of a certain differential inequality valid for manifolds with semi negative holomorphic sectional curvature. In \cite{D-T} the main result is also deduced from a differential inequality. In another direction there is a long history of uniformization theorems derived from differential geometrical conditions, in this note we show that we have  equality  in the inequalities obtained in \cite{W-Y} and \cite{D-T}  if and only if the manifold is covered by the ball and the metric $\omega$ is K\"ahler Einstein.

\section{The  inequalities }
Let $(X, \omega) $ be a compact connected K\"ahler manifold of complex dimension $n$ with quasi negative holomorphic sectional curvature.  
Let $H(x) = max_{(v \in T_x(X)   :  |v|_{\omega} = 1)} { \bf k}_x(v)$
\[ M(x) = -\left( \frac{n+1}{2n} H(x) \right)  . \] So by assumption $M \geq 0 $ and $M(x_0) > 0.$ By \cite{W-Y}, \cite{T-Y}, \cite{D-T}, \cite{W-Y-2}  the canonical bundle $K_X$ is ample. It follows from \cite{Y} that there exist a K\"ahler Einstein metric $\omega'$ such that 
$Ricci(\omega') = - \omega'.$ 
 
Let $\overline{M} = \frac{ \int_X M(x) {\omega'}^n }{  \int_X {\omega'}^n} $ be the mean value of $M$ with respect to $\omega'.$  Note that
$\overline{M} > 0.$ 
Let $S = \frac{n \omega \wedge { \omega'}^{n-1}}{ { \omega'}^n}$ be the trace of $\omega$ with respect to $\omega'.$ 
Let $f$ be the  unique solution of the equation 
$ \Delta_{\omega'} (f) = \frac{M}{\overline{M}} - 1, \  min_X f = 0. $
We consider the differential equation 
\begin{equation} \Delta_{\omega'} (h) = M exp(h) -1 \label{eq1}   \end{equation} 
It is  proved in \cite{W-Y} that the function $log(S)$ is a subsolution of equation (\ref{eq1}) i. e.

\begin{equation} \Delta_{\omega'} (log(S)) \geq M exp( log(S)) -1 \label{ineq2}   \end{equation}

Moreover it is proved in   \cite{D-T} that the function  
$ f - log(\overline{M})$ is a super solution of the same equation: i.e.  
\begin{equation} \Delta_{\omega'} (f - log(\overline{M})  \leq  M exp( f - log(\overline{M})) -1. \label{ineq3}   \end{equation} 
See also \cite{K-W}.

Using (\ref{ineq2}) and (\ref{ineq3}) we can derive the inequality  

\begin{equation}   log(S) \leq f - log({\overline{M}}) \label{ineq4} \end{equation} 
see \cite{D-T}.

\section{The equalities} 

Here we want to study under which conditions the above inequalities are equalities.

First of all we have 

\begin{theorem} 
Let $(X,\omega)$ be a compact connected K\"ahler manifold of complex dimension $n,$ such that the holomorphic sectional curvature of $\omega$ is quasi negative then:

\[ \begin{array}{l} \mbox{i)    If $ n =1$ then the function $\log(S)$ solves equation (\ref{eq1})  and $X$ is covered by the disk,} \\ 
\mbox{moreover the function $ f - log(\overline{M})$ solves equation (\ref{eq1})} \\ \mbox{ if and only if $\omega = \lambda \omega'$  for some positive constant $\lambda$.} \\ \\
\mbox{ii)   If $n \geq 2$ then then the function $\log(S)$ solves equation ( \ref{eq1})} \\ \mbox{ if and only if  $X$ is covered by the ball and $\omega = \lambda \omega'$  for some positive constant $\lambda$.}  \end{array} \] \label{dim1} \end{theorem}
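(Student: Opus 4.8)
The plan is to analyse, in each dimension, exactly when the inequalities underlying (\ref{ineq2})--(\ref{ineq4}) degenerate to equalities.

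\emph{Dimension one.} Here I would invoke the Gauss curvature equation. Writing $\omega=S\omega'$, it reads $\Delta_{\omega'}\log S=-1-S\,{\bf k}_\omega$ (using that $Ricci(\omega')=-\omega'$ amounts to Gauss curvature $-1$ for $\omega'$); since for $n=1$ one has $M=-H=-{\bf k}_\omega$, this is precisely $\Delta_{\omega'}\log S=Me^{\log S}-1$, so $\log S$ solves (\ref{eq1}) unconditionally. That $X$ is covered by the disk then follows from $\int_X{\bf k}_\omega<0$ via Gauss--Bonnet (genus $\ge2$) and uniformization. For the second assertion I would argue thus: if $f-\log\overline M$ also solves (\ref{eq1}), then by (\ref{ineq4}) the function $w=(f-\log\overline M)-\log S\ge0$ satisfies $\Delta_{\omega'}w=M(e^{w}e^{\log S}-e^{\log S})\ge0$, so $w$ is constant on the compact $X$, and then $\Delta_{\omega'}w\equiv0$ with $M\not\equiv0$ forces $w\equiv0$; hence $f-\log\overline M=\log S$. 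Comparing $\Delta_{\omega'}\log S=\frac{M}{\overline M}-1$ (definition of $f$) with $\Delta_{\omega'}\log S=MS-1$ ((\ref{eq1})) yields $M(S-\frac1{\overline M})\equiv0$; on the open set $\{M>0\}$ this makes $\log S$ locally constant, hence $M\equiv\overline M$ there, and continuity of $M$ together with connectedness of $X$ then forces $\{M>0\}=X$, so $S\equiv1/\overline M$ and $\omega=\lambda\omega'$. The converse is immediate: $\omega=\lambda\omega'$ gives $S=\lambda$, $M\equiv\overline M=1/\lambda$, hence $f\equiv0$ and $f-\log\overline M=\log\lambda=\log S$, which solves (\ref{eq1}).

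\emph{Dimension $\ge2$, sufficiency.} If $\omega=\lambda\omega'$ and $X$ is covered by the ball, then by uniqueness of the negative Kähler--Einstein metric $\omega'$ is a constant multiple of the descended Bergman metric, hence has constant holomorphic sectional curvature $-\frac2{n+1}$; so ${\bf k}_\omega\equiv-\frac2{(n+1)\lambda}$, whence $M\equiv\frac1{n\lambda}$ and $S\equiv n\lambda$ are constant and $\Delta_{\omega'}\log S=0=Me^{\log S}-1$.

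\emph{Dimension $\ge2$, necessity --- the crux.} Suppose $\log S$ solves (\ref{eq1}), i.e.\ (\ref{ineq2}) holds with equality at every point. The plan is to reopen the pointwise computation of \cite{W-Y}: diagonalising $\omega$ against $\omega'$ at a point with eigenvalues $\lambda_1,\dots,\lambda_n>0$ and $S=\sum_i\lambda_i$, one writes $S\,\Delta_{\omega'}\log S=\Delta_{\omega'}S-|\partial S|^2_{\omega'}/S$ and sees it split, via the Bochner identity, into a nonnegative ``Schwarz lemma'' gradient term, a term that the Einstein condition $Ricci(\omega')=-\omega'$ converts into the constant $-1$, and the curvature term $-\sum_{i,j}R_{i\bar ij\bar j}(\omega)\lambda_i\lambda_j$, which is bounded below using the algebraic lemma (holomorphic sectional curvature $\le H$ at the point) and the elementary inequality $\sum_i\lambda_i^2\ge S^2/n$ --- the two bounds together producing the factor $\frac{n+1}{2n}$ in the definition of $M$. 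Equality in (\ref{ineq2}) forces equality in each ingredient: equality in $\sum_i\lambda_i^2\ge S^2/n$ gives $\lambda_1=\cdots=\lambda_n$, i.e.\ $\omega=\phi\omega'$ pointwise; once the eigenvalues coincide every $\omega'$-unitary frame diagonalises $\omega$, so equality in the algebraic lemma --- which fixes ${\bf k}_\omega$ along the ``balanced'' directions of an arbitrary unitary frame, and such directions sweep out all unit vectors --- shows $\omega$ has constant holomorphic sectional curvature $H(x)$ at each point. Since $n\ge2$, Schur's lemma for holomorphic sectional curvature makes $H$ globally constant, and $H<0$ because $H={\bf k}_{x_0}(v)<0$ by hypothesis. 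Then $Ricci(\omega)=\frac{(n+1)H}{2}\omega$, so a constant rescaling of $\omega$ is a Kähler--Einstein metric with Ricci equal to minus itself; uniqueness gives $\omega=\lambda\omega'$, and, being (after lifting) a complete simply connected Kähler manifold of constant negative holomorphic sectional curvature, the universal cover of $X$ is complex hyperbolic space, i.e.\ the ball.

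\emph{Main obstacle.} The delicate point is the necessity step for $n\ge2$: one must verify that equality in (\ref{ineq2}) really leaves no slack in \emph{either} the bound $\sum_i\lambda_i^2\ge S^2/n$ \emph{or} the algebraic holomorphic-sectional-curvature lemma, and that equality in the latter --- after the eigenvalues are known equal --- propagates to every direction at the point. This amounts to retracing Wu--Yau's estimate line by line and recording its equality cases. Everything else (Gauss--Bonnet and uniformization for $n=1$, uniqueness of the negative Kähler--Einstein metric, Schur's lemma in dimension $\ge2$, and the classification of complex space forms) is classical.
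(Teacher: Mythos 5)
Your argument is correct and follows essentially the same route as the paper: reopen the Wu--Yau pointwise estimate for $\Delta_{\omega'}\log S$ and record the equality cases of the $\ell^1$--$\ell^2$ comparison (forcing $\omega$ and $\omega'$ to be proportional at each point) and of Royden's algebraic lemma (forcing the holomorphic sectional curvature to be direction-independent at each point), then apply Schur's lemma and the classification of complex space forms. The only real divergences are cosmetic --- you obtain the constancy of the conformal factor from the Einstein condition for constant holomorphic sectional curvature plus uniqueness of the K\"ahler--Einstein metric, where the paper simply notes that $d(\lambda\omega)=d\lambda\wedge\omega=0$ forces $d\lambda=0$ for $n\geq 2$, and your dimension-one maximum-principle detour lands on the same criterion (that $M$ be a positive constant) which the paper reads off directly from $\frac{M}{\overline{M}}(e^{f}-1)\equiv 0$.
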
 

\begin{proof}
Assume  $n =1$ then $X$ is covered by the disk by Gauss Bonnet theorem, also we can see that $\log(S)$ solves equation (\ref{eq1}) by direct calculation. Moreover $f - log(\overline{M})$ solves equation (\ref{eq1}) if and only if $\frac{M}{\overline{M}} -1 = \frac{M}{\overline{M}}exp(f) -1$ if and only if $\frac{M}{\overline{M}}(exp(f) -1) \equiv 0.$ Let $\Omega$ be the non empty open set where $M$ is non zero. We then have that $f \equiv 0$ in $\Omega $ hence $  M \equiv \overline{M}$ on $\Omega$ and $M \equiv 0$ on $X \setminus \Omega.$ Since $X$ is connected, it follows that   
$f - log(\overline{M})$ solves equation (\ref{eq1}) if and only if the function $M$ is a positive constant. However in dimension $1$ the function $- M$ is the Gaussian curvature, claim i) follows.   
We now prove ii).  If $X$ is covered by the ball and $\omega= \lambda \omega'$ with $\lambda $ constant, the metric $\omega'$ is induced by the Bergman metric on the ball, see Example 6.6 chapter  IX pag.163 \cite{K-N}, hence   it has constant negative holomorphic sectional curvature ${\bf k'} = \frac{-2}{n+1},$ see \cite{K-N} chapter IX remark pag.168 and Theorem 7.8 (2) pag. 169. Therefore the metric $ \omega$ has constant holomorphic sectional curvature  ${\bf k} = \frac{-2}{ \lambda(n+1)}.$ Hence the function $M =  \frac{1}{ n \lambda}$ is a positive constant, so  $M = \overline{M} $ and $log(S) = log(n \lambda)$ is also constant. Therefore $log(S)$ solves equation ( \ref{eq1}). Assume vice versa that $n \geq 2$ and that $\log(S)$ solves equation ( \ref{eq1}), to prove the claim we  
look more closely at the proof of proposition 9 in \cite{W-Y}.
For every $x \in X$ let us fix holomorphic local coordinate centered at $x,$ such that the local expression $g_{i,\bar{j}} $ of the metric $\omega$ satisfies $g_{i,\bar{j}}(0)  = \delta_{i,j} $ and 
$ \frac{\partial g_{i,\bar{j}}}{\partial z_k}(0) =  \frac{\partial g_{i,\bar{j}}}{\partial { \bar{z}}_k}(0) = 0,$ whereas the local expression 
${g'}_{i,\bar{j}} $ of the metric $\omega'$ is diagonal at $0.$  
We denote by 
$R_{i, \bar{j}.k.\bar{l}}$ and  ${R'}_{i, \bar{j}.k.\bar{l}}$ the component of the curvature tensor of $\omega$ and $\omega'$ respectively in these coordinate system. We also denote by $r_{i \bar{j}},$ and ${r'}_{i\bar{j}},$ the components of the Ricci tensor of $\omega$ and $\omega'$ respectively. We have the following formula, see 
\cite{Y-2} and \cite{W-Y},

$\Delta_{\omega'} (S) = I_1 + I_2 + I_3$ where 
\[ I_1 = \sum_{j=1}^n \frac{{r'}_{i,\bar{j}}}{({g'}_{j,\bar{j}})^2} \] 
\[ I_2 =  \sum_{j,k,l = 1}^n \frac{| \frac{ \partial {g'}_{j,\bar{k}}}{\partial z_l}|^2}{{g'}_{j,\bar{j}} ({g'}_{k,\bar{k}})^2 {g'}_{l,\bar{l}} } \]
\[ I_3 = -\sum_{j,k=1}^n \frac{R_{j, \bar{j},k, \bar{k}}}{{g'}_{j,\bar{j}} {g'}_{k,\bar{k}} }. \]
On the other hand 
\[ \Delta'( \log(S)) = \frac{\Delta'(S)}{S} -  \frac{| \nabla'(S)|^2}{S^2 } \]
Moreover in \cite{W-Y} Wu and Yau proved the inequalities:
\[ I_1 \geq -S \]
\[ I_2 \geq \frac{| \nabla'(S)|^2}{S} \]
\[ I_3 \leq M S^2. \]
It follows that the condition that $log(S)$ solves equation ( \ref{eq1}) is equivalent to 
\[ I_1 = -S \]
\[ I_2 = \frac{| \nabla'(S)|^2}{S} \]
\[ I_3 = M S^2. \]

Let $v$ be the vector with components 
 $\left({g'}_{1,\bar{1}}, \ldots {g'}_{n,\bar{n} } \right) $ and $w$ be the vector with components 
$ \left( 1,1,\ldots,1 \right),$  in the proof of the inequality $I_3 \leq M S^2,$ Wu and Yau used the Schwarz inequality $<v,w>^2 \leq ||v||^2||w||^2,$ i.e. the comparison of the $l^1$ and $l^2$ norm of the vector $v.$ Therefore if $I_3 = MS^2$ then $\omega'(x) = \lambda(x) \omega(x),$ with $\lambda(x) = \frac{n}{S(x)}.$ Since the $(1,1)$ forms $\omega'$ and 
$ \frac{n}{S} \omega$ are intrinsic, and for every $x \in X$ there exists a coordinate system centered at $x$ where they coincide,  they must coincide everywhere. Since both $\omega$ and $\omega'$ are closed $(1,1)$ forms, and since $n \geq 2$ we conclude that $\lambda = \frac{n}{S}$ is constant.  Now observe that any unit vector   $\alpha_1$ in $x$ for the metric $\omega(x)$ can be the first vector of an $\omega(x)$ orthonormal basis $\alpha_1, \ldots, \alpha_n.$ Since  
 $ g'(x) = \lambda(x) g(x)$   such basis is also $\omega'(x)$ orthogonal, and we can choose  an holomorphic coordinate system where $\alpha_j =  \frac{\partial }{\partial z_j}$ at $x$ for $1 \leq j \leq n.$ 
Now  if $\Theta$ is the curvature form of $\omega$ at $x$ then by definition $ \Theta( \alpha_j \otimes \alpha_j, \alpha_j \otimes \alpha_j)  - H(x)  \leq 0$ for all $ 1 \leq j \leq n,$ in particular 
\begin{equation}  \sum_{j=1}^n \left(  \Theta( \alpha_j \otimes \alpha_j, \alpha_j \otimes \alpha_j)  - H(x) \right)  \leq 0. \label{Royden} \end{equation} 
However the proof of the inequality $I_3 \leq MS^2$ and of the Royden Lemma shows that if we have $I_3 = MS^2$ then
$ \Theta( \alpha_j \otimes \alpha_j, \alpha_j \otimes \alpha_j) = - H(x) $ for all $1 \leq j \leq n.$ Since $\alpha_1$ is an arbitrary $\omega(x)$ unit vector, we conclude that the holomorphic sectional curvature of $\omega$ at $x$ does not depend on the direction chosen, since $n \geq 2$ the holomorphic sectional curvature is  a negative constant, see \cite{K-N} chapter IX pag. 168 Theorem 7.5. Therefore $X$ is covered by the ball, see  \cite{K-N} chapter IX pag 170 Theorem 7.9.
\end{proof}

\begin{remark}

As we observed above the function $f - \log(\overline{M})$ solves equation ( \ref{eq1}) if and only if the  function $M$ is a positive constant. 
This is the case for example if $X$ is covered by a bounded hermitian symmetric domain  and $\omega = \lambda \omega'$ with $\lambda$ positive constant.  So in general the condition $M$ constant does not imply that $X$ is covered by the ball.  Assume that $M$ is a positive constant, is then $X$ covered by a K\"ahler manifold where the $\omega$ isometry group acts transitively ? Is the metric $\omega$ K\"ahler Einstein in this case ? \end{remark}

We also have:

\begin{theorem} 
Let $(X,\omega)$ be as above then 
the following conditions are equivalent :
\[ \begin{array}{l} 1) \  \omega = \lambda \omega'
\mbox{ for some positive constant $\lambda $,}  
 \\ \mbox{ and the manifold X is covered by the ball}  \\   
2) \mbox{ $log(S) = f - log(\overline{M})$    everywhere in $X$} \\ 
3) \mbox{ $log(S) = f - log(\overline{M}) $ at some point in $X$}   \end{array} \] \label{theo-ineq}
\end{theorem}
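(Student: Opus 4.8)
The plan is to prove the chain of implications $1)\Rightarrow 2)\Rightarrow 3)\Rightarrow 1)$. The implication $2)\Rightarrow 3)$ is trivial. For $1)\Rightarrow 2)$, assume $\omega=\lambda\omega'$ with $X$ covered by the ball; then by the computation already carried out in the proof of Theorem \ref{dim1} (the Bergman metric case) the function $M$ is the positive constant $\frac{1}{n\lambda}$, so $M=\overline{M}$, and moreover $S=n\lambda$ is constant. Since $M$ is constant, $f$ is the solution of $\Delta_{\omega'}(f)=\frac{M}{\overline M}-1=0$ with $\min_X f=0$, hence $f\equiv 0$, so $f-\log(\overline M)=-\log(n\lambda)=\log(1/(n\lambda))$. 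On the other hand $\log(S)=\log(n\lambda)$. For these to be equal we need $\log(n\lambda)=-\log(n\lambda)$, i.e.\ $n\lambda=1$; this shows I should be more careful — in fact the correct reading is that equality in (\ref{ineq4}) forces both $\log(S)$ to be a supersolution and $f-\log(\overline M)$ to be a subsolution, hence both solve (\ref{eq1}), so I should instead argue via the maximum principle.

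The real engine is the following: by (\ref{ineq2}), $\log(S)$ is a subsolution of (\ref{eq1}); by (\ref{ineq3}), $f-\log(\overline M)$ is a supersolution; by (\ref{ineq4}), $\log(S)\le f-\log(\overline M)$ everywhere. Set $u=\log(S)-\big(f-\log(\overline M)\big)\le 0$. The hard part will be to show that if $u(x)=0$ at one point then $u\equiv 0$ and moreover both functions actually solve the equation. First I would note that $\Delta_{\omega'}u = \Delta_{\omega'}\log(S)-\Delta_{\omega'}f \ge \big(M\,e^{\log S}-1\big)-\big(Me^{f-\log\overline M}-1\big)$; wherever $u=0$ we have $e^{\log S}=e^{f-\log\overline M}$, so at an interior maximum of $u$ (value $0$) the right side vanishes and we get $\Delta_{\omega'}u\ge 0$ there, but more usefully, rewriting, $\Delta_{\omega'}u \ge M\big(e^{\log S}-e^{f-\log\overline M}\big) = M e^{f-\log\overline M}\big(e^{u}-1\big)$. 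Since $u\le 0$ and $M\ge 0$, this gives $\Delta_{\omega'}u \ge M e^{f-\log\overline M}(e^u-1) \ge -c(x)\,(-u)$ for a bounded nonnegative function $c$ (using $1-e^u\le -u$ for $u\le 0$), i.e.\ $\Delta_{\omega'}u + c(x)u\le$ wait — the inequality goes the right way: $\Delta_{\omega'}u \ge M e^{f-\log\overline M}(e^u-1)$, and since $e^u-1\ge u$ always, $\Delta_{\omega'}u \ge \big(M e^{f-\log\overline M}\big)u$. Thus $u$ satisfies $\Delta_{\omega'}u - q(x)u \ge 0$ with $q = M e^{f-\log\overline M}\ge 0$; by the strong maximum principle (Hopf) a function with $u\le 0$ attaining its maximum $0$ at an interior point on the connected compact $X$ must be identically $0$. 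Hence $3)\Rightarrow$ ($u\equiv 0$), which is $2)$.

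Next I would upgrade $u\equiv 0$ to the statement that $\log(S)$ solves (\ref{eq1}): from $u\equiv 0$ we get $\log(S)=f-\log\overline M$, hence $\Delta_{\omega'}\log(S)=\Delta_{\omega'}f = \frac{M}{\overline M}-1$. Combined with (\ref{ineq2}), $\frac{M}{\overline M}-1\ge M S-1$, i.e.\ $\frac{M}{\overline M}\ge M S$; combined with (\ref{ineq3}) read through $u\equiv 0$, $\frac{M}{\overline M}-1\le M S -1$, so $\frac{M}{\overline M}= MS$ wherever $M>0$, forcing $S=1/\overline M$ constant on the open set $\Omega=\{M>0\}$, and then $\Delta_{\omega'}\log(S)=0$ there; but also $\Delta_{\omega'}\log(S)= \frac M{\overline M}-1$, so $M=\overline M$ on $\Omega$, and since $M\ge 0$ with mean $\overline M$ this forces $\Omega = X$ and $M\equiv\overline M$ constant, whence $\log(S)$ solves (\ref{eq1}) identically (both sides equal $0$). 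Now invoke Theorem \ref{dim1}: for $n\ge 2$, $\log(S)$ solving (\ref{eq1}) implies $X$ is covered by the ball and $\omega=\lambda\omega'$; for $n=1$ the disk case of Theorem \ref{dim1}(i) together with $M$ constant gives $\omega=\lambda\omega'$ and the disk (which is the $1$-dimensional ball). This yields $3)\Rightarrow 1)$. Finally $1)\Rightarrow 2)$: if $X$ is covered by the ball and $\omega=\lambda\omega'$, the computation in the proof of Theorem \ref{dim1} gives $M\equiv\frac{1}{n\lambda}$ constant and $S\equiv n\lambda$ constant, so $M=\overline M$, $f\equiv 0$, and then (\ref{ineq2}), (\ref{ineq3}) are equalities, so $\log(S)$ and $f-\log\overline M$ both solve (\ref{eq1}); since a subsolution equal to a supersolution that it lies below and both solve the same equation, the maximum principle argument above (now with $u\equiv$ const $\le 0$ and $\Delta u=0$) forces $u\equiv 0$, giving $2)$. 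The main obstacle is making the strong maximum principle step clean — in particular checking that $q=Me^{f-\log\overline M}$ is bounded (it is, by compactness and smoothness of $M$, $f$) so that Hopf's lemma applies on the Riemannian manifold $(X,\omega')$.
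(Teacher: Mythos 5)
Your overall architecture matches the paper's: the implication $3)\Rightarrow 2)$ via the strong maximum principle applied to $u=\log(S)-(f-\log(\overline{M}))\le 0$, with $\Delta_{\omega'}u\ge q\,u$ for a bounded $q=\frac{M}{\overline{M}}e^{f}\ge 0$, is exactly the paper's argument (the paper writes the coefficient as $L=\frac{M}{\overline{M}}e^{f}\frac{e^{\psi}-1}{\psi}$; your use of $e^{u}-1\ge u$ is an equivalent repackaging). Your passage from $u\equiv 0$ to condition $1)$ also lands on the same invocation of Theorem \ref{dim1}; the paper gets there faster by simply observing that a function which is simultaneously a subsolution and a supersolution is a solution, whereas you unpack this into the explicit identities $M/\overline{M}=MS$ and $M\equiv\overline{M}$ — more laborious but correct.

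The one genuine slip is in $1)\Rightarrow 2)$, and it stems from a sign error: with $M=\overline{M}=\frac{1}{n\lambda}$ and $f\equiv 0$ you have $f-\log(\overline{M})=-\log\bigl(\tfrac{1}{n\lambda}\bigr)=+\log(n\lambda)=\log(S)$, so the direct verification you started with actually closes the implication immediately (this is the paper's remark that $S\overline{M}\equiv 1$). You instead computed $f-\log(\overline{M})=-\log(n\lambda)$, concluded there was an obstruction, and replaced the computation with the claim that ``the maximum principle argument above (now with $u\equiv\mathrm{const}\le 0$ and $\Delta u=0$) forces $u\equiv 0$.'' That claim is false as stated: any negative constant satisfies $u\le 0$, $\Delta_{\omega'}u=0$ and $\Delta_{\omega'}u\ge qu$, and the strong maximum principle only forces $u\equiv 0$ when the maximum $0$ is actually attained, which is precisely what you are trying to prove here. (A correct patch along your lines would use the equality $\Delta_{\omega'}u=q(e^{u}-1)$ with $q>0$ and $u$ constant to force $e^{u}=1$, but the direct computation makes this unnecessary.) Everything else is sound.
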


\begin{proof}
Assume condition 1) holds, then as we have seen above, the function $M = \overline{M} =  \frac{1}{ n \lambda}$ is constant, hence the function $f$ is identically $0,$  $S = n \lambda$ and $ S \overline{M} \equiv 1,$ therefore condition 2) holds. Viceversa if condition 2) holds, then $log(S) = f - log(\overline{M})$ solves equation (\ref{eq1}), since it is both a sub and a super solution, hence  we obtain condition 1) by Theorem \ref{dim1} i) if $n =1$ and by Theorem \ref{dim1} ii) if $n \geq 2.$   To see that 2) is equivalent to 3) we apply the maximum principle.  Let 
$ \psi = log(S) - f + log(\overline{M}).$

Note that the function $\psi $ satisfies differential inequality  

\begin{equation} 
\Delta_{\omega'} (\psi) \geq L \psi. \label{difference} \end{equation} 

Where $L =  \frac{M}{\overline{M}} exp(f) \left( \frac{exp(\psi) - 1}{\psi} \right)$ (here we define  $ \left( \frac{exp(\psi) - 1}{\psi} \right)(p) = 1$ if $\psi(p) = 0$). By (\ref{ineq4}) we have that   $ \psi \leq 0,$  and that $ L \geq  0.$ Therefore by the strong maximum principle, \cite{G-T} Theorem 3.5,  if $\psi$ vanishes at some point in $X$ it is identically $0.$ \end{proof}

\end{document}